\def\cocoa{{\hbox{\rm C\kern-.13em o\kern-.07em C\kern-.13em o\kern-.15em A}}}
\newcommand{\PGL}{\text{\rm PGL}}
\newcommand{\SL}{\text{\rm SL}}
\newcommand{\G}{\mathbb G}
\newcommand{\wt}{\widetilde}
\renewcommand{\:}{\colon}
\newcommand{\IP}{\text{\bf P}_{\hskip-0.1cm k}}
\newcommand{\IA}{\text{\bf A}_{k}}
\newcommand{\lra}{\longrightarrow}
\renewcommand{\O}{\mathcal O}
\renewcommand{\deg}{\mathrm{deg}\,}
\newcommand{\IIP}{\check{\text{\bf P}}_{\hskip-0.1cm k}^{\hskip-0.03cm 2}}
\newtheorem{theorem}{Theorem}
\theoremstyle{definition}
\newtheorem{subsct}[theorem]{}
\theoremstyle{plain}
\font\smallrm=cmr8
\font\smallsc=cmcsc10
\font\smallsl=cmsl10
\begin{document}
\author
[{\smallrm EDUARDO ESTEVES and MARINA MARCHISIO}]
{Eduardo Esteves and Marina Marchisio}
\title
[{\smallrm Invariant theory of foliations of the projective plane}]
{Invariant theory of foliations\\ of the projective plane}
\begin{abstract} We study the invariant theory of singular foliations 
of the projective plane. Our first main result is that a foliation of degree 
$m > 1$ is not stable 
only if it has singularities in dimension 1 or contains an isolated 
singular point with multiplicity 
at least $(m^2-1)/(2m+1)$. Our second main result is the 
construction of an invariant map from the space of foliations 
of degree $m$ to that of curves of degree $m^2+m-2$. We describe this 
map explicitly in case $m=2$.
\end{abstract}
\maketitle

\section{Introduction}

The study of (singular) foliations of the projective plane is an old one. 
It was central in works by Darboux \cite{D} and Poincar\'e \cite{P} in 
the XIX Century. More recently, the interest in the subject has been 
revived by Jouanolou \cite{J}. It has been an active area of study ever since.

If we want to study foliations up to projective equivalence, we enter 
the realm of Invariant Theory. Though the motivation for this study 
is natural, and Invariant Theory is a classical subject, not much 
has been done so far in 
this direction. We can mention the work by Gom\'ez-Mont and Kempf 
\cite{G-MKe}, 
who have shown that a foliation whose 
all singular points have Milnor number 1 is stable. (In fact, they showed the same 
result holds for singular foliations of higher dimension spaces as well.) 
Only recently, Alc\'antara \cite{A}, \cite{AA} has characterized 
the semi-stable foliations of degree 1 and 2, and studied 
their quotient spaces.

In these notes we propose to advance this study. Our first main result is 
Theorem \ref{thm11}, which says that a foliation of degree $m > 1$ is 
nonstable (resp.~nonsemi-stable) only if it has singularities in 
dimension 1 or contains an isolated singular point of multiplicity 
at least (resp.~greater than) $(m^2-1)/(2m+1)$.

Our second main result is Theorem \ref{thm2}, which 
yields an invariant rational map $\Phi$ from the (projective) space 
of foliations of degree $m\geq 2$ to that of plane curves of degree $m^2+m-2$. 
Using this map, we can, in principle, produce invariants of foliations 
out of invariants of plane curves. However, though the invariants of plane 
curves can all be described by the symbolic method of the XIX Century, 
generators for the algebras of invariants are known only for 
very small degrees, not larger than 8. 
Since for $m\geq 3$, the curves have degree at least 10, 
the map $\Phi$ might be 
manageable only for $m=2$, in which case we are dealing with quartics. 
In this case, we describe the map explicitly in Section 4.
 
These notes report on work partly done during a visiting professorhip 
of the first named author at the Universit\`a degli Studi di Torino. That 
author would like to thank Regione Piemonte for 
financing his position. Also, 
he would like to thank the 
Dipartimento di Matematica of the Universit\`a, specially 
Prof.~Alberto Conte, for the warm hospitality extended. 
Finally, he acknowledges 
support from CNPq, Proc.~303797/2007-0 and 473032/2008-2, and FAPERJ, 
Proc.~E-26/102.769/2008 and E-26/110.556/2010.

\section{Singular foliations}

\begin{subsct} \emph{Foliations.} 
Given a smooth algebraic variety $X$ over an algebraically closed field $k$, 
a $d$-dimensional foliation of $X$ is 
a rank-$d$ 
subbundle of the tangent bundle of $X$. Typically though, these 
subbundles do not exist. For instance, take the projective plane 
$X:=\IP^2$. A subbundle of rank 1 of the tangent bundle would 
give rise to an 
exact sequence of locally free sheaves,
$$
0 \to \O_{\IP^2}(m) \to \Omega^1_{\IP^2} \to \O_{\IP^2}(n) \to 0
$$
for certain integers $m$ and $n$, and 
this sequence would split because 
$$
H^1(\IP^2,\O_{\IP^2}(m-n))=0.
$$
Thus, 
$$
\Omega^1_{\IP^2}\cong\O_{\IP^2}(m)\oplus\O_{\IP^2}(n).
$$
Then it would follow that Euler sequence,
\begin{equation}\label{euler}
0 \to \Omega^1_{\IP^2} \to \O_{\IP^2}(-1)^{\oplus 3} \to \O_{\IP^2} \to 0,
\end{equation}
would split as well, as 
$$
H^1(\IP^2,\O_{\IP^2}(-m))=H^1(\IP^2,\O_{\IP^2}(-n))=0,
$$
giving rise to a nonzero 
global section of $\O_{\IP^2}(-1)$, an absurd.
\end{subsct}

\begin{subsct} \emph{Singular foliations.} One might ask however 
not for a 
subbundle, but for a subsheaf. 
This gives rise to a \emph{singular foliation}. In other 
words, a singular foliation is a 
subsheaf of the tangent sheaf of $X$. Its 
dimension is the generic rank of the sheaf. For instance, take the 
projective plane $X:=\IP^2$. Given a singular foliation of dimension 1, 
we may replace the subsheaf by a possibly larger reflexive subsheaf. 
Since $X$ is smooth of dimension 2, this means that the subsheaf 
is locally 
free by \cite{Ok}, Lemma 1.1.10, p.~149. So, 
a singular foliation of $\IP^2$ is a nonzero (thus injective) map
\begin{equation}\label{eta}
\eta\:\O_{\IP^2}(1-m)\lra T_{\IP^2},
\end{equation}
where $T_{\IP^2}$ is the tangent sheaf of $\IP^2$. We will deal 
only with one-dimensional singular foliations of $\IP^2$ from now on, 
and will thus drop the adjective ``singular.''  

Taking duals in the Euler sequence \eqref{euler}, we obtain the exact 
sequence
\begin{equation}\label{eulerdual}
0 \lra \O_{\IP^2}\to \O_{\IP^2}(1)^{\oplus 3} \to T_{\IP^2} \to 0.
\end{equation}
Since $\text{Ext}^1(\O_{\IP^2}(1-m),\O_{\IP^2})=0$, any map $\eta$ as in 
\eqref{eta} lifts to a map 
$$
\wt\eta\:\O_{\IP^2}(1-m)\to\O_{\IP^2}(1)^{\oplus 3},
$$
which corresponds to a choice of three homogeneous polynomials 
$F$, $G$ and 
$H$ of degree $m$. 
In other words, $\eta$ induces a homogeneous vector field on 
the three-dimensional affine space $\IA^3$:
\begin{equation}\label{D}
D:=F\frac{\partial}{\partial x}+G\frac{\partial}{\partial y}+
H\frac{\partial}{\partial z}.
\end{equation}
Here $x$, $y$ and $z$ are the 
coordinates of $\IA^3$. This vector field is 
not unique, as the lifting $\wt\eta$ of $\eta$ is not, but any other 
vector field is obtaining from the above one by summing a multiple 
of the Euler field:
$$
P\Big(x\frac{\partial}{\partial x}+y\frac{\partial}{\partial y}+
z\frac{\partial}{\partial z}\Big).
$$
At any rate, we may harmlessly say that $D$, instead of $\eta$, 
is the foliation.

Conversely, given $D$ as in \eqref{D}, one can describe 
the foliation 
$\eta$ in very concrete terms: the direction given by $\eta$ at a point 
$(x:y:z)\in\IP^2$ is that of the line passing 
through $(x:y:z)$ and $(F(x,y,z):G(x,y,z):H(x,y,z))$, whenever these two 
points are distinct. 
\end{subsct}

\begin{subsct} \emph{The space of foliations.} There are thus 
many (singular) foliations. In fact, identifying foliations that differ one 
from the other by multiplication by a nonzero constant, we obtain a 
projective space,
$$
\mathbf F_m:=\mathbf P(H^0(\IP^2,T_{\IP^2}\otimes\O_{\IP^2}(m-1))).
$$
It follows from the long exact sequence in cohomology associated to 
\eqref{eulerdual} that
\begin{align*}
\dim\mathbf F_m&=3h^0(\IP^2,\O_{\IP^2}(m))-h^0(\IP^2,\O_{\IP^2}(m-1))-1\\
&=3\binom{m+2}{2}-\binom{m+1}{2}-1\\
&=m^2+4m+2.
\end{align*}
\end{subsct}

\begin{subsct} \emph{Singular points.} 
The map $\eta$ in \eqref{eta}, though 
injective, does not give rise to a subbundle. In other words, the 
degeneracy scheme of the map is nonempty. 
The degeneracy scheme is called the 
\emph{singular locus} of the foliation, and its points the 
\emph{singular points} or \emph{singularities} of the foliation. Since 
$\eta\neq 0$, the dimension of this locus is at most 1. If the 
dimension is 1, then $\eta$ decomposes in a unique way as
$$
\O_{\IP^2}(1-m)\lra\O_{\IP^2}(1-n)\lra T_{\IP^2},
$$
where the first map is multiplication by a homogeneous polynomial 
of degree $m-n$, for a certain $n<m$, and the second is a 
foliation with finite singular locus. In this case, we say that 
$\eta$ has singularities in dimension 1.

If the dimension is zero each singularity appears with a certain length in the singular locus, called its \emph{Milnor number}. Then we can use Porteous Formula 
(see \cite{Fu}, Thm.~14.4, p.~254) to compute the 
sum $\delta$ of the Milnor numbers:
\begin{align*}\label{delta}
\delta=&\int_{\IP^2}c_2(T_{\IP^2}\otimes\O_{\IP^2}(m-1))\cap[\IP^2]\\
=&\int_{\IP^2}\Bigg[\frac{c(\O_{\IP^2}(m))^3}{c(\O_{\IP^2}(m-1))}\Bigg]_2
\cap[\IP^2]\quad
\text{(Sequence \eqref{eulerdual} and Whitney Formula)}\\
=&\int_{\IP^2}\Bigg[\frac{(1+mh)^3}{1+(m-1)h}\Bigg]_2\cap[\IP^2]\quad
\text{(where $h:=c_1(\O_{\IP^2}(1))$)}\\
=&\int_{\IP^2}[(1+3mh+3m^2h^2)(1-(m-1)h+(m-1)^2h^2)]_2\cap[\IP^2]\\
=&(m-1)^2-3m(m-1)+3m^2\\
=&m^2+m+1.
\end{align*}

Another important invariant of a singular point of the foliation is 
its \emph{multiplicity}, the maximum power of the maximal ideal of the local ring of $\mathbb P^2$ at the point containing the ideal of the singular locus of the foliation.
\end{subsct}

\begin{subsct} \emph{The degree.} 
Given a singular foliation $\eta$ as in \eqref{eta}, the 
integer $m$, clearly nonnegative, 
has a geometric interpretation. Indeed, $m$ 
is the number of tangencies of $\eta$ to a general line. 
More precisely, 
given a line $L$ on $\IP^2$, we may look at the set of points 
where $\eta$ is either 
singular or assigns a line equal to $L$. 
Given a general line, this is a finite set. (Just pick a nonsingular 
point $P$ of $\eta$, and choose $L$ transversal to the line at 
$P$ given by $\eta$.) The number of points $s$ of this set, counted 
with the appropriate weights, is given by Porteous Formula, as the 
length of the degeneracy scheme of the map of vector bundles
$$
\begin{CD}
\O_{\IP^2}(1-m)|_L\oplus T_L @>(\eta|_L,\beta)>> T_{\IP^2}|_L,
\end{CD}
$$
where $\beta$ is the natural inclusion between tangent bundles. Thus
\begin{align*}
s=&\int_{L}\big(c_1(T_{\IP^2}|_L)-c_1(\O_{\IP^2}(1-m)|_L)-c_1(T_L)\big)
\cap [L]\\
=&\int_{L}\big(3h-(1-m)h-2h\big)\quad
\text{(where $h$ is the class of a point)}\\
=&m.
\end{align*}
\end{subsct}

\begin{subsct} \emph{The dual point of view.} Let
$$
\omega:=\bigwedge^2\Omega^1_{\IP^2}\cong\O_{\IP^2}(-3).
$$
The natural product map
$$
\Omega^1_{\IP^2}\otimes\Omega^1_{\IP^2}\lra \omega
$$
gives rise to an isomorphism
$$
\Omega^1_{\IP^2}\lra T_{\IP^2}\otimes\omega.
$$
Under this isomorphism, a map $\eta$ as in \eqref{eta}, 
which corresponds to a section of $T_{\IP^2}\otimes\O_{\IP^2}(m-1)$, 
corresponds to a section 
\begin{equation}\label{tau}
\tau\in H^0(\IP^2,\Omega^1_{\IP^2}\otimes\O_{\IP^2}(m+2)).
\end{equation}
Because of \eqref{euler}, this section corresponds to three 
homogeneous polynomials $A$, $B$ and $C$ of degree $m+1$ satisfying the 
relation
\begin{equation}\label{weq}
xA+yB+zC=0.
\end{equation}
We may view the polynomials as giving a homogeneous form on $\IA^3$:
\begin{equation}\label{w}
w:=Adx+Bdy+Cdz.
\end{equation}

If $\eta$ is given by $D$ as in \eqref{D}, then 
$w$ is obtained from the determinant:
$$
\left|\begin{matrix} 
x & y & z\\
F & G & H\\
dx & dy & dz
\end{matrix}\right|
$$
In other words, $A=yH-zG$, $B=zF-xH$ and $C=xG-yF$. Of course, the assignment 
$\eta\mapsto\tau$ gives rise to a (linear) isomorphism:
$$
\mathbf P(H^0(\IP^2,T_{\IP^2}\otimes\O_{\IP^2}(m-1)))\lra
\mathbf P(H^0(\IP^2,\Omega^1_{\IP^2}\otimes\O_{\IP^2}(m+2))).
$$
We may view $\mathbf F_m$ as the space on the left-hand side or that on the 
right-hand side, at our convenience. And we may harmlessly 
say that $\tau$ or $w$ is the foliation.

Geometrically, 
for each point $(a:b:c)$ of $\IP^2$ the direction at the point 
given by $\eta$ is that of the line with equation:
$$
A(a,b,c)x+B(a,b,c)y+C(a,b,c)z=0.
$$
And the singular locus of the foliation is given by $A=B=C=0$. 

Notice that, because of \eqref{weq}, the singular locus is locally given by two equations. So the following inequality holds relating the Milnor number $\mu_P$ and the multiplicity $e_P$ of a singularity $P$ of the foliation:
$$
\mu_P \geq \frac{(e_P + 1) e_P}{2} + e_P -1 = \frac{e_P^2 + 3 e_P -2}{2}.
$$

\end{subsct}

\section{The action}

\begin{subsct} \emph{The  action.} The group of automorphisms 
of $\IP^2$, namely $\PGL(3)$, acts in a 
natural way on the space of foliations. 
The action can be described very simply in geometric terms: Let $\phi$ 
be an automorphism of $\IP^2$; given a foliation $\eta$, 
the new foliation 
$\phi\cdot\eta$ assigns to every point $P\in\IP^2$ the line 
$\phi(L)$, where $L$ is the line given by $\eta$ at $\phi^{-1}(P)$. 
Algebraically, let $g$ be a 3-by-3 matrix corresponding 
to $\phi$, 
and let $w$ as in \eqref{w} correspond to $\eta$. Then $\phi\cdot\eta$ 
corresponds to $g\cdot w$, where
$$
g\cdot w=\left[\begin{matrix}
A^{g} & B^{g} & C^{g}\\
\end{matrix}\right]g^{-1}\left[\begin{matrix}
dx\\
dy\\
dz
\end{matrix}\right].
$$
(Given any polynomial $P\in k[x,y,z]$, we denote by $P^{g}$ 
the polynomial which, viewed as a function on $\IA^3$, interpreted as the space of column vectors  of dimension 3, satisfies
$$
P^{g}(v)=P(g^{-1}v)\quad\text{for each }v\in\IA^3.)
$$
\end{subsct}

\begin{subsct} \emph{Stable points.} The action of $\PGL(3)$ produces 
the same orbits as the action by $\SL(3)$, the special linear group, 
that of 3-by-3 matrices with determinant 1, induced by the natural 
surjection $\SL(3)\to\PGL(3)$. So we will consider this induced action.

Geometric Invariant Theory tells us that there is a categorical quotient 
of a certain open subset of $\mathbf F_m$, that of 
\emph{semi-stable} points. The 
semi-stable points are those for which there is an invariant homogeneous 
polynomial on the coordinates of $\mathbf F_m$ not vanishing at the 
point. And the quotient is simply the projective scheme associated 
to the (graded) algebra of invariants. Furthermore, a smaller 
open subset of $\mathbf F_m$, consisting of \emph{stable} points, 
whose orbits in the semi-stable locus are closed, admits even a 
geometric quotient, which is thus an orbit space; see \cite{FoKiM}.

To understand the quotient, it is crucial to describe the semi-stable 
points. However, it is not easy to determine them from the 
definition. A lot more manageable than the definition 
is the Hilbert--Mumford Numerical 
Criterion, by means of one-parameter subgroups. 

It was using this criterion that 
Gom\'ez-Mont and Kempf \cite{G-MKe} have shown that a foliation whose 
all singular points have Milnor number 1 is stable, that is, 
corresponds to a stable point of $\mathbf F_m$. 
And Alc\'antara \cite{A}, \cite{AA} has characterized 
the semi-stable foliations of degrees 1 and 2.

In our case, a one-parameter subgroup is a nontrivial 
homomorphism of algebraic groups $\lambda\:\G_m\to\SL(3)$, 
where $\G_m$ is the multiplicative group of the field $k$. Every 
such homomorphism is diagonalizable: there is $g\in\SL(3)$ such that 
$$
g^{-1}\lambda(t)g=\lambda_{r_1,r_2,r_3}(t),\quad
\text{where }\lambda_{r_1,r_2,r_3}(t)=\left[\begin{matrix}
t^{r_1} & 0 & 0\\
0 & t^{r_2} & 0\\
0 & 0 & t^{r_3}
\end{matrix}\right]
$$
for each $t\in\G_m$. Since $\det\lambda(t)=1$ for every $t$, 
the $r_i$ are integers such that 
$$
r_0+r_1+r_2=0.
$$
We may also assume that $r_1\geq r_2\geq r_3$. Since $\lambda$ is 
nontrivial, $r_1>0>r_3$.

Now, the space of forms $w$ as in \eqref{w}, satisfying 
\eqref{weq}, has a basis of the form:
\begin{equation}\label{basisw}
\begin{aligned}
&w_{\alpha}^1:=x^{\alpha_1}y^{\alpha_2}z^{\alpha_3}(-ydx+xdy),\\
&w_{\beta}^2:=x^{\beta_1}y^{\beta_2}z^{\beta_3}(-zdx+xdz),\\
&w_{\gamma}^3:=y^{\gamma_2}z^{\gamma_3}(-zdy+ydz),
\end{aligned}
\end{equation}
where $\alpha:=(\alpha_1,\alpha_2,\alpha_3)$ and 
$\beta:=(\beta_1,\beta_2,\beta_3)$ 
(resp.~$\gamma:=(\gamma_2,\gamma_3)$) 
run through all triples (resp.~pairs) 
of nonnegative integers summing up to $m$. This 
basis diagonalizes the action of $\lambda_{r_1,r_2,r_3}$. More 
precisely,
\begin{align*}
\lambda_{r_1,r_2,r_3}(t)\cdot w_{\alpha}^1&=
t^{-r_1(\alpha_1+1)-r_2(\alpha_2+1)-r_3\alpha_3}w_{\alpha}^1,\\
\lambda_{r_1,r_2,r_3}(t)\cdot w_{\beta}^2&=
t^{-r_1(\beta_1+1)-r_2\beta_2-r_3(\beta_3+1)}w_{\beta}^2,\\
\lambda_{r_1,r_2,r_3}(t)\cdot w_{\gamma}^3&=
t^{-r_2(\gamma_2+1)-r_3(\gamma_3+1)}w_{\gamma}^3.\\
\end{align*}

Finally, consider a point of $\mathbf F_m$, corresponding to $w$ as in 
\eqref{w}. Then, for each $g\in\SL(3)$, 
\begin{equation}\label{gw}
g\cdot w=\sum_{\alpha}a_\alpha(g)w_{\alpha}^1+\sum_\beta 
b_\beta(g)w_{\beta}^2+\sum_\gamma c_\gamma(g)w_{\gamma}^3,
\end{equation}
for unique $a_\alpha(g)$, $b_\beta(g)$ and $c_\gamma(g)$ in $k$. 
Then the Hilbert--Mumford Numerical Criterion says that $w$ is not 
stable, that is, the corresponding point on $\mathbf F_m$ is not 
stable, 
if and only if there are $g\in\SL(3)$ and integers 
$r_1,r_2,r_3$ satisfying $r_1+r_2+r_3=0$ and $0<r_1\geq r_2\geq r_3<0$ 
such that all of the following conditions hold:
\begin{equation}\label{HM}
\begin{aligned}
r_1(\alpha_1+1)+r_2(\alpha_2+1)+r_3\alpha_3\leq&0\quad
\text{if }a_\alpha(g)\neq0,\\
r_1(\beta_1+1)+r_2\beta_2+r_3(\beta_3+1)\leq&0\quad
\text{if }b_\beta(g)\neq0,\\
r_2(\gamma_2+1)+r_3(\gamma_3+1)\leq&0\quad
\text{if }c_\gamma(g)\neq0.
\end{aligned}
\end{equation}
Furthermore, $w$ is nonsemi-stable if in addition all the inequalities 
above are strict. 
\end{subsct}

\begin{theorem}\label{thm11} A foliation of degree $m>1$ is 
nonstable (resp.~nonsemi-stable) 
only if it has singularities in dimension $1$ or contains an isolated 
singular point with multiplicity 
at least (resp.~greater than) $(m^2-1)/(2m+1)$.
\end{theorem}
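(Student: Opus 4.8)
The plan is to run the Hilbert--Mumford criterion directly. Suppose $w$ is nonstable. Replacing $w$ by $g\cdot w$ for a suitable $g\in\SL(3)$ — an operation that moves the singular locus by the automorphism $g$, changing neither its dimension nor the multiplicities of its points — we may assume by \eqref{HM} that there are integers $r_1\ge r_2\ge r_3$ with $r_1+r_2+r_3=0$ and $r_1>0>r_3$ for which \eqref{HM} holds for every nonzero $a_\alpha$, $b_\beta$, $c_\gamma$. If $w$ has singularities in dimension $1$ there is nothing to prove, so assume $A$, $B$, $C$ have no common factor. I will show that the fixed point $P:=(1:0:0)$, of largest weight for $\lambda_{r_1,r_2,r_3}$, is then an isolated singular point of multiplicity at least $(m^2-1)/(2m+1)$.

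The first step is the local computation of $e_P$. Since $x\ne 0$ at $P$, relation \eqref{weq} lets us generate the ideal of the singular locus near $P$ by the dehomogenizations $\bar B,\bar C$ of $B,C$ in the coordinates $p=y/x$, $q=z/x$, so that $e_P=\min(\mathrm{ord}_P\bar B,\mathrm{ord}_P\bar C)$. By \eqref{basisw} the $a_\alpha$ contribute to $\bar B$ a monomial of degree $m-\alpha_1$, the $b_\beta$ contribute to $\bar C$ a monomial of degree $m-\beta_1$, and the $c_\gamma$ contribute only in degree $m+1$; since distinct basis monomials stay distinct, and in different degrees, after dehomogenization, no cancellation occurs in lowest degree and
\[
e_P=m-\max\big(\{\alpha_1:a_\alpha\ne0\}\cup\{\beta_1:b_\beta\ne0\}\big).
\]
Moreover \eqref{HM} forbids $a_{(m,0,0)}$ and $b_{(m,0,0)}$ — their left-hand sides are $r_1(m+1)+r_2>0$ and $r_1m-r_2>0$ — so $A,B,C$ all vanish at $P$ and $P$ is indeed singular. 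Everything thus reduces to bounding $\alpha_1,\beta_1$ from above. Because $r_2\ge r_3$ forces $r_1+2r_2\ge0$, the optimal choice in the second line of \eqref{HM} is $\beta_2=0$, and eliminating $r_3$ yields $\beta_1\le[(r_1+r_2)(m+1)-r_1]/(2r_1+r_2)$; the same manipulation gives a smaller bound for $\alpha_1$. Hence
\[
e_P\ \ge\ m-\frac{(r_1+r_2)(m+1)-r_1}{2r_1+r_2}\ =\ \frac{r_1 m-r_2}{2r_1+r_2}.
\]

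It remains to optimize the right-hand side over the admissible weights, and this is where the no-common-factor hypothesis enters. Writing $t:=r_2/r_1\in[-\tfrac12,1]$, the bound equals $(m-t)/(2+t)$, a decreasing function of $t$. If $r_2\le0$ then $t\le0$ and $e_P\ge m/2>(m^2-1)/(2m+1)$. If $r_2>0$, a finite singular locus forces $z\nmid\gcd(A,B,C)$, hence the survival of a monomial of $z$-degree $0$ in one of $A,B,C$; inspecting \eqref{basisw} and \eqref{HM} shows that for $r_2>0$ the sole permitted such monomial is $c_{(m,0)}$, the term $y^{m+1}$ of $C$, and its inequality $r_2(m+1)+r_3\le0$ reads $m r_2\le r_1$, i.e. $t\le1/m$. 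The decreasing bound then gives $e_P\ge(m-\tfrac1m)/(2+\tfrac1m)=(m^2-1)/(2m+1)$, the extremum being attained for the weight $\lambda_{m,1,-(m+1)}$. In the nonsemistable case all inequalities in \eqref{HM} are strict, which turns $t\le1/m$ into $t<1/m$ and the weight bounds into strict ones, yielding $e_P>(m^2-1)/(2m+1)$.

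The hard part will be this last optimization together with the bookkeeping that supports it: proving that breaking \emph{every} possible common factor, not only $z$, costs at least the constraint $m r_2\le r_1$; verifying rigorously the absence of cancellation in the lowest-degree forms; and disposing of the degenerate configurations — some of $A,B,C$ identically zero, or $r_1=r_2$ so that the top-weight locus is the line $\{z=0\}$ rather than the point $P$ — all of which must be shown to fall into the dimension-$1$ alternative.
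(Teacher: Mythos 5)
Your proposal is correct and follows essentially the same route as the paper: reduce to $g=1$ via the Hilbert--Mumford criterion, observe that either $z$ divides $A$, $B$, $C$ (so the singular locus has dimension $1$) or the survival of $c_{(m,0)}$ forces $mr_2\le r_1$ (the paper's inequality $r_2\le -r_3/(m+1)$), and then bound the multiplicity at $(1:0:0)$ by controlling which $\alpha_1$, $\beta_1$ can occur. The only difference is organizational — you optimize the single rational function $(m-t)/(2+t)$ in $t=r_2/r_1$ where the paper bounds $\alpha_1$ and $\beta_1$ against fixed thresholds — and the ``hard parts'' you flag at the end (common factors other than $z$, the case $r_1=r_2$) are in fact already disposed of by the argument you gave.
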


\begin{proof} Let $w$ as in \eqref{w} correspond to the foliation. 
Assume first that $w$ is nonstable. Then 
there are $g\in\SL(3)$ and integers 
$r_1,r_2,r_3$ satisfying 
\begin{equation}\label{rrr}
r_1+r_2+r_3=0\quad\text{and}\quad 0<r_1\geq r_2\geq r_3<0
\end{equation} 
such that \eqref{HM} holds. Since $w$ is stable if and only 
$g\cdot w$ is, and the foliation $w$ has singularities in dimension 1 
or contains an isolated singular point with a certain multiplicity 
if and only if the same holds 
for $g\cdot w$, we may assume that $g=1$, and simplify the 
notation:
$$
a_\alpha:=a_\alpha(1),\quad b_\beta:=b_\beta(1),\quad 
c_\gamma:=c_\gamma(1).
$$

We claim that either the foliation has singularities in dimension 1 or 
\begin{equation}\label{r2r3}
r_2\leq\frac{-r_3}{m+1}.
\end{equation}
Indeed, suppose \eqref{r2r3} does not hold. Let 
$\alpha=(\alpha_1,\alpha_2,\alpha_3)$ be a triple of 
nonnegative integers with $\alpha_3=0$ and $\alpha_1+\alpha_2=m$. Then, 
since $\alpha_1,r_1-r_2\geq 0$ and $-r_3,r_2>0$,
\begin{align*}
0<(r_1-r_2)\alpha_1+r_2m-r_3&=
(r_1-r_2)\alpha_1+r_2(\alpha_1+\alpha_2)-r_3\\
&=r_1(\alpha_1+1)+r_2(\alpha_2+1)+r_3\alpha_3.
\end{align*}
Thus \eqref{HM} yields 
$a_\alpha=0$.

Also, let 
$\beta=(\beta_1,\beta_2,\beta_3)$ be a triple of 
nonnegative integers with $\beta_3=0$ and $\beta_1+\beta_2=m$. Then, 
since $\beta_1,r_1-r_2\geq 0$ and $r_2,m-1>0$,
\begin{align*}
0<(r_1-r_2)\beta_1+r_2(m-1)&=
(r_1-r_2)\beta_1+r_2(\beta_1+\beta_2)-r_2\\
&=r_1(\beta_1+1)+r_2\beta_2+r_3(\beta_3+1).
\end{align*}
Thus \eqref{HM} yields 
$b_\beta=0$.

Finally, since $r_2(m+1)+r_3>0$, we have that $c_{(m,0)}=0$. But then 
it follows from \eqref{gw} that $z|w$, and thus the singular locus of the 
foliation contains a line.

Assume now that the singular locus of the foliation is finite. 
Then \eqref{r2r3} holds, 
from which we obtain
\begin{equation}\label{r1r3}
r_1-r_3=-r_2-2r_3\geq\frac{r_3}{m+1}-2r_3=-r_3\frac{2m+1}{m+1}.
\end{equation}
Let $\alpha=(\alpha_1,\alpha_2,\alpha_3)$ be a triple of 
nonnegative integers summing up to $m$. We claim:
\begin{equation}\label{a1}
\text{If }\alpha_1>\frac{m^2-1}{2m+1}\text{ then }a_\alpha=0.
\end{equation}
Indeed, if $\alpha_1>(m^2-1)/(2m+1)$ then
\begin{align*}
r_1(\alpha_1+1)+r_2(\alpha_2+1)+r_3\alpha_3&=
(r_1-r_3)\alpha_1+(r_2-r_3)\alpha_2+r_3(m-1)\\
&\geq-r_3\frac{2m+1}{m+1}\alpha_1+r_3(m-1)\\
&>0,
\end{align*}
where for the equality above we used that $\alpha_3=m-\alpha_1-\alpha_2$ 
and $r_3=-r_1-r_2$, and for the first inequality we used \eqref{r1r3}, 
$\alpha_2\geq 0$ and $r_2\geq r_3$. Thus $a_\alpha=0$ from \eqref{HM}.

Similarly:
\begin{equation}\label{b1}
\text{If }\beta_1>\frac{m^2+m+1}{2m+1}\text{ then }b_\beta=0.
\end{equation}
Indeed, if $\beta_1>(m^2+m+1)/(2m+1)$ then
\begin{align*}
r_1(\beta_1+1)+r_2\beta_2+r_3(\beta_3+1)&=
(r_1-r_3)\beta_1+(r_2-r_3)\beta_2+r_3m-r_2\\
&\geq-r_3\frac{2m+1}{m+1}\beta_1+r_3\Big(m+\frac{1}{m+1}\Big)\\
&>0,
\end{align*}
where for the first inequality above we used \eqref{r2r3}. Thus 
$b_\beta=0$ from \eqref{HM}.

Now, using \eqref{basisw} to expand \eqref{gw}, we get 
$w=Adx+Bdy+Cdz$, where 
\begin{align*}
B=&\sum_\alpha a_\alpha x^{\alpha_1+1}y^{\alpha_2}z^{\alpha_3}-
\sum_\gamma c_\gamma y^{\gamma_2}z^{\gamma_3+1},\\
C=&\sum_\beta b_\beta x^{\beta_1+1}y^{\beta_2}z^{\beta_3}+
\sum_\gamma c_\gamma y^{\gamma_2+1}z^{\gamma_3}.
\end{align*}
Let $P:=(1:0:0)$. Since $xA+yB+zC=0$, the ideal of the singular 
locus of the foliation at $P$ is generated by $B(1,y/x,z/x)$ and 
$C(1,y/x,z/x)$. Since $\gamma_2+\gamma_3=m$, it follows that the 
multiplicity of the foliation at $P$ is $\min(m+1,\xi)$ where 
\begin{align*}
\xi:=&\min\big(\min(\alpha_2+\alpha_3\,|\,a_\alpha\neq 0),
\min(\beta_2+\beta_3\,|\,b_\beta\neq 0)\big)\\
=&\min\big(\min(m-\alpha_1\,|\,a_\alpha\neq 0),
\min(m-\beta_1\,|\,b_\beta\neq 0)\big)\\
=&m-\max\big(\max(\alpha_1\,|\,a_\alpha\neq 0),
\max(\beta_1\,|\,b_\beta\neq 0)\big).
\end{align*}
(The minimum (resp. maximum) of the empty set is 
$+\infty$ (resp.~$-\infty$) by convention.) 
Thus, it follows from \eqref{a1} and \eqref{b1} that
\begin{equation}\label{xii}
\xi\geq m-\max\Big(\frac{m^2-1}{2m+1},\frac{m^2+m+1}{2m+1}\Big)
=m-\frac{m^2+m+1}{2m+1}
=\frac{m^2-1}{2m+1}.
\end{equation}

If $w$ is nonsemi-stable then the same proof works with the following 
modifications: the inequality in \eqref{r2r3} is strict while those in 
\eqref{r1r3}, \eqref{a1}, \eqref{b1} and \eqref{xii} are not.
\end{proof}

\section{The dual discriminant curve}

\begin{theorem}\label{thm2} Given a foliation of $\IP^2$ of 
degree $m\geq 2$ whose singular locus does 
not contain a double curve, the 
lines tangent to the foliation with multiplicity at least $2$ are 
parameterized by a curve on the dual plane $\IIP$ of degree $m^2+m-2$.
\end{theorem}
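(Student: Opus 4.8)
The plan is to reinterpret the condition geometrically through the rational \emph{direction map}
$$
\gamma\colon\IP^2\to\IIP,\qquad P\mapsto (A(P)\colon B(P)\colon C(P)),
$$
which sends a nonsingular point to the tangent line of the foliation there, viewed as a point of the dual plane. Its indeterminacy locus is exactly the singular locus $A=B=C=0$, which under the ``no double curve'' hypothesis is zero-dimensional; because of \eqref{weq} it is locally cut out by two equations, so its length equals $\delta=m^2+m+1$. Hence a general fiber of $\gamma$ consists of $(m+1)^2-\delta=m$ reduced points — the $m$ tangencies of the corresponding line — and $\gamma$ is generically finite of degree $m$. A line is tangent to the foliation with multiplicity at least $2$ precisely when two of these tangencies collide, i.e.\ when the corresponding point of $\IIP$ lies on the branch curve of $\gamma$. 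Thus the asserted curve is the branch curve $\gamma(R)$, where $R\subset\IP^2$ is the ramification curve.

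First I would identify $R$ with the zero scheme of the Jacobian $\det\bigl(\partial(A,B,C)/\partial(x,y,z)\bigr)$, a form of degree $3m$; the Euler relation \eqref{weq} guarantees this determinant vanishes along the singular locus but not identically. To compute the degree of $\gamma(R)$ I would intersect it with a general line $\ell\subset\IIP$, that is, with the pencil of lines through a general point $Q=(Q_1\colon Q_2\colon Q_3)$. The preimage $\gamma^{-1}(\ell)$ is the plane curve
$$
\Gamma\colon\ Q_1A+Q_2B+Q_3C=0
$$
of degree $m+1$, which passes through $Q$ by \eqref{weq}. Assuming $\gamma$ restricts to a birational map from $R$ onto $\gamma(R)$, the projection formula gives
$$
\deg\gamma(R)=R\cdot\Gamma-\sum_{P}i_P(R,\Gamma),
$$
the sum running over the singular points $P$ of the foliation, which are the shared points of $R$ and $\Gamma$ carrying no $\gamma$-image. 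By B\'ezout $R\cdot\Gamma=3m(m+1)$.

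The crux is the local intersection multiplicity at each singularity $P$. Using \eqref{weq} to eliminate one of $A,B,C$ locally, $\Gamma$ becomes a general member of the pencil spanned by the remaining two, $R$ becomes their Jacobian, and the Milnor number $\mu_P$ is the local intersection multiplicity of that pair. I expect the identity $i_P(R,\Gamma)=2\mu_P$ to hold, whence $\sum_P i_P(R,\Gamma)=2\delta$ and
$$
\deg\gamma(R)=3m(m+1)-2(m^2+m+1)=m^2+m-2.
$$
Establishing $i_P(R,\Gamma)=2\mu_P$ is the main obstacle, and it is the single step where the hypothesis that the singular locus contains no double curve is essential: a one-dimensional or non-reduced singular locus would both spoil the length count $\delta=\sum_P\mu_P$ and force $\gamma(R)$ to acquire the dual lines of the singular curve as spurious components. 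As a consistency check, when every singularity is nondegenerate $\Gamma$ is smooth, of genus $\binom{m}{2}$, and adjunction applied to $\gamma|_\Gamma\colon\Gamma\to\ell\cong\IP^1$ gives $2\binom{m}{2}-2+2m=m^2+m-2$, in agreement; for $m=2$ this is the quartic treated in Section~4. It then remains to confirm that $\gamma(R)$ is reduced of the expected dimension and that $\gamma|_R$ is generically injective, which I would verify for a general foliation and extend by semicontinuity.
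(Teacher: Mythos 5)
Your route---pulling the count back to $\IP^2$ via the direction map $\gamma=(A:B:C)$ and its ramification curve $R$---is genuinely different from the paper's, which never leaves the incidence variety: there one takes $V\subset\IP^2\times\IIP$ cut out by $\check xx+\check yy+\check zz=0$ and $\check xF+\check yG+\check zH=0$, lets $D\subset V$ be the degeneracy locus of $\pi^*\Omega^1_{\IIP}\to\Omega^1_V$, and computes $\pi_*[D]=(m^2+m-2)\check h$ by the Whitney formula alone, with no local analysis at the singularities. Your plan instead rests on two claims you leave open: that the discriminant curve is $\gamma(R)$, and that $i_P(R,\Gamma)=2\mu_P$ at each singular point. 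Both fail for foliations allowed by the hypothesis, so this is a genuine gap rather than a missing routine verification.

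Take $m=2$ with $F=x^2$, $G=y^2$, $H=z^2$, so $A=yz(z-y)$, $B=xz(x-z)$, $C=xy(y-x)$; the singular locus consists of seven reduced points, so the hypothesis of the theorem holds. One computes $\det\bigl(\partial(A,B,C)/\partial(x,y,z)\bigr)=-6\,xyz(x-y)(y-z)(z-x)$: the curve $R$ is six lines, each contracted by $\gamma$ to a single point, so $\gamma(R)$ is finite; your own formula confirms this, giving $3m(m+1)-\sum_Pi_P(R,\Gamma)=18-18=0$. Yet the locus of lines tangent with multiplicity at least $2$ is the quartic $\check x\check y\check z(\check x+\check y+\check z)=0$, of degree $4=m^2+m-2$ as the theorem asserts: its components are pencils of lines whose degenerate tangency sits \emph{at a singular point} of the foliation, a phenomenon $\gamma$ cannot see since it is undefined there. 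The same example kills the local identity: at $(1:1:1)$ one has $\mu_P=1$, but three branches of $R$ pass through the point and $\Gamma$ is smooth there, so $i_P(R,\Gamma)=3\neq 2\mu_P$. Consequently ``verify for a general foliation and extend by semicontinuity'' cannot close the argument: the statement you would be extending, $\deg\gamma(R)=m^2+m-2$, is simply false on a nonempty locus of foliations satisfying the hypothesis. What does specialize well is the cycle class $\pi_*[D]$ on the incidence variety, which automatically absorbs tangencies at singular points and contracted components---and that is exactly why the paper sets the computation up there. Your Riemann--Hurwitz consistency check is sound for a foliation all of whose tangency degenerations occur at nonsingular points, and could be promoted to a proof in that generic setting, but not to a proof of the theorem as stated.
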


\begin{proof} Let $\check x$, $\check y$ and $\check z$ 
be coordinates of $\IIP$ dual to 
$x$, $y$ and $z$. The incidence variety $I\subset \IP^2\times\IIP$ is thus given 
by
$$
\check xx+\check yy+\check zz=0.
$$
Let $D$ as in $\eqref{D}$ correspond to the foliation. Requiring the above line to be 
tangent to the foliation at $(x:y:z)$ is to impose that
$$
\check xF+\check yG+\check zH=0.
$$
Let $V\subseteq I$ be the subscheme of $I$ given by the above equation. It 
parameterizes the pairs $(P,L)$ where $L$ is a line on $\IP^2$ and $P$ is a 
point on $L$ where the foliation is singular or tangent to $L$. Let 
$\pi\: V\to\IIP$ denote the projection, and let $D\subseteq V$ be the 
degeneracy locus of the natural map $\pi^*\Omega^1_{\IIP}\to\Omega^1_V$. 
Since the singular locus of the foliation contains no double curve, 
$\pi(D)\neq\IIP$. Let $C\subseteq\IIP$ be the curve such that 
$\pi_*[D]=[C]$ as cycles. This is the 
curve parameterizing lines tangent to the foliation with 
multiplicity at least 2. 

We claim that $\deg C=m^2+m-2$. 
Indeed, let $h_1$ (resp. $h_2$) be the pullback to $\IP^2\times\IIP$ of 
the hyperplane class $h$ on $\IP^2$ (resp. $\check h$ on $\IIP$). Then 
$$
[I]=h_1+h_2\quad\text{and}\quad [V]=(h_1+h_2)(mh_1+h_2)
$$
in the Chow ring of $\IP^2\times\IIP$. Now, 
$$
[D]=c_1(\Omega^1_V)\cap[V]-c_1(\pi^*\Omega^1_{\IIP})\cap [V].
$$
It follows from the Whitney Sum Formula (\cite{Fu}, Thm.~3.2(e), p.~50) 
and the Euler exact sequence that
$$
\pi^*c_1(\Omega^1_{\IIP})\cap [\IP^2\times\IIP]=-3h_2.
$$
In addition, $\Omega^1_V$ sits in the natural exact sequence,
$$
0\lra \O_V(-1,-1)\oplus\O_V(-m,-1) \lra \Omega^1_{\IP^2\times\IIP}|_V 
\lra \Omega^1_V\lra 0.
$$
Thus, applying the Whitney Sum Formula again,
\begin{align*}
c_1(\Omega^1_V)\cap [V]=&(-3(h_1+h_2)+(h_1+h_2)+(mh_1+h_2))[V]\\
=&((m-2)h_1-h_2)[V].
\end{align*}
So,
$$
[D]=((m-2)h_1+2h_2)[V]=((m-2)h_1+2h_2)(h_1+h_2)(mh_1+h_2).
$$
Since $h_1^3=h_2^3=0$, we get
\begin{align*}
[D]=&(2m+m(m-2)+(m-2))h_1^2h_2+((m-2)+2+2m)h_1h_2^2\\
=&(m^2+m-2)h_1^2h_2+3mh_1h_2^2,
\end{align*}
and thus $\pi_*[D]=(m^2+m-2)h$.
\end{proof}

\begin{subsct} \emph{Degree $2$.} Let 
$$
\mathbf C_d:=\mathbf P(H^0(\IP^2,\O_{\IP^2}(d))),
$$
the projective space parameterizing plane curves of degree $d$. It has 
dimension $(d^2+3d)/2$. By 
Theorem \ref{thm2}, there is a rational map 
$$
\Phi\:\mathbf F_m\dashrightarrow \mathbf C_{m^2+m-2}.
$$
In case $m=2$, both the target and the source of $\Phi$ have 
the same dimension, as
$$
m^2+4m+2=\frac{(m^2+m-2)^2+3(m^2+m-2)}{2}=14.
$$
In this case, the dimensions are small enough that $\Phi$ 
can be explicitly described, using \cocoa \cite{cocoa} (assuming the 
ground field $k$ has characteristic 0).

Consider a 
point of $\mathbf F_2$ given by $w$ as in \eqref{w}. As in Section 3 we 
may write
$$
w=\sum_{\alpha}a_\alpha w_{\alpha}^1+\sum_\beta b_\beta w_{\beta}^2+
\sum_\gamma c_\gamma w_{\gamma}^3,
$$
for unique $a_\alpha$, $b_\beta$ and $c_\gamma$ in $k$, 
where $\alpha:=(\alpha_1,\alpha_2,\alpha_3)$ and 
$\beta:=(\beta_1,\beta_2,\beta_3)$ 
(resp.~$\gamma:=(\gamma_2,\gamma_3)$) 
run through all triples (resp.~pairs) 
of nonnegative integers summing up to 2, and the 
$w_{\alpha}^1$, $w_{\beta}^2$ and $w_{\gamma}^3$ are given in \eqref{basisw}.

The coefficients $a_\alpha$, $b_\beta$ and $c_\gamma$ can be seen as coordinates 
of $\mathbf F_2\cong\IP^{14}$. Then the associated quartic to $w$ 
is given by:
\begin{align*}
&\Big(c_{(2,0)}c_{(0,2)}-\frac{c_{(1,1)}^2}{4}\Big)\check{x}^4
+\Big(\frac{c_{(1,1)}b_{(0,1,1)}}{2}-c_{(0,2)}b_{(0,2,0)}-c_{(2,0)}b_{(0,0,2)}
\Big)\check{x}^3\check{y}\\
+&\Big(b_{(0,2,0)}b_{(0,0,2)}+c_{(0,2)}b_{(1,1,0)}-\frac{b_{(0,1,1)}^2}{4}
-\frac{c_{(1,1)}b_{(1,0,1)}}{2}\Big)\check{x}^2\check{y}^2\\
+&\Big(\frac{b_{(1,0,1)}b_{(0,1,1)}}{2}-c_{(0,2)}b_{(2,0,0)}-b_{(0,0,2)}b_{(1,1,0)}
\Big)\check{x}\check{y}^3
+\Big(b_{(2,0,0)}b_{(0,0,2)}-\frac{b_{(1,0,1)}^2}{4}\Big)\check{y}^4\\
+&\Big(c_{(0,2)}a_{(0,2,0)}+c_{(2,0)}a_{(0,0,2)}
-\frac{c_{(1,1)}a_{(0,1,1)}}{2}\Big)\check{x}^3\check{z}\\
+&\Big(\frac{c_{(1,1)}a_{(1,0,1)}}{2}+\frac{b_{(0,1,1)}a_{(0,1,1)}}{2}
+c_{(2,0)}b_{(1,0,1)}-\frac{c_{(1,1)}b_{(1,1,0)}}{2}
-b_{(0,0,2)}a_{(0,2,0)}\\
&\,\,\,-b_{(0,2,0)}a_{(0,0,2)}-c_{(0,2)}a_{(1,1,0)}\Big)\check{x}^2\check{y}\check{z}\\
+&\Big(\frac{b_{(1,1,0)}b_{(0,1,1)}}{2}+c_{(0,2)}a_{(2,0,0)}+b_{(1,1,0)}a_{(0,0,2)}
+b_{(0,0,2)}a_{(1,1,0)}+c_{(1,1)}b_{(2,0,0)}\\
&\,\,\,-\frac{b_{(0,1,1)}a_{(1,0,1)}}{2}-\frac{b_{(1,0,1)}a_{(0,1,1)}}{2}
-b_{(0,2,0)}b_{(1,0,1)}\Big)\check{x}\check{y}^2\check{z}\\
+&\Big(\frac{b_{(1,1,0)}b_{(1,0,1)}}{2}+\frac{b_{(1,0,1)}a_{(1,0,1)}}{2}\\
&\,\,\,-b_{(2,0,0)}b_{(0,1,1)}-b_{(0,0,2)}a_{(2,0,0)}-b_{(2,0,0)}a_{(0,0,2)}
\Big)\check{y}^3\check{z}\\
+&\Big(a_{(0,2,0)}a_{(0,0,2)}+\frac{c_{(1,1)}a_{(1,1,0)}}{2}
-\frac{a_{(0,1,1)}^2}{4}-c_{(2,0)}a_{(1,0,1)}\Big)\check{x}^2\check{z}^2\\
+&\Big(\frac{a_{(1,0,1)}a_{(0,1,1)}}{2}+b_{(1,0,1)}a_{(0,2,0)}+b_{(0,2,0)}a_{(1,0,1)}
-\frac{b_{(0,1,1)}a_{(1,1,0)}}{2}-\frac{b_{(1,1,0)}a_{(0,1,1)}}{2}\\
&\,\,\,-c_{(2,0)}b_{(2,0,0)}-c_{(1,1)}a_{(2,0,0)}-a_{(0,0,2)}a_{(1,1,0)}
\Big)\check{x}\check{y}\check{z}^2\\
+&\Big(b_{(2,0,0)}b_{(0,2,0)}+b_{(0,1,1)}a_{(2,0,0)}+a_{(2,0,0)}a_{(0,0,2)}
+b_{(2,0,0)}a_{(0,1,1)}-\frac{b_{(1,1,0)}^2}{4}-\frac{a_{(1,0,1)}^2}{4}\\
&\,\,\,-\frac{b_{(1,0,1)}a_{(1,1,0)}}{2}-\frac{b_{(1,1,0)}a_{(1,0,1)}}{2}
\Big)\check{y}^2\check{z}^2\\
+&\Big(\frac{a_{(1,1,0)}a_{(0,1,1)}}{2}+c_{(2,0)}a_{(2,0,0)}-a_{(0,2,0)}a_{(1,0,1)}
\Big)\check{x}\check{z}^3\\
+&\Big(\frac{b_{(1,1,0)}a_{(1,1,0)}}{2}
+\frac{a_{(1,1,0)}a_{(1,0,1)}}{2}\\
&\,\,\,-b_{(0,2,0)}a_{(2,0,0)}-b_{(2,0,0)}a_{(0,2,0)}
-a_{(2,0,0)}a_{(0,1,1)}\Big)\check{y}\check{z}^3\\
+&\Big(a_{(2,0,0)}a_{(0,2,0)} -\frac{a_{(1,1,0)}^2}{4}\Big)\check{z}^4=0,
\end{align*}
where $\check{x}$, $\check{y}$ and $\check{z}$ are the coordinates of $\IIP$ dual to 
$x$, $y$ and $z$.
\end{subsct}

\begin{subsct} \emph{Invariants and instability.} 
Invariants for degree-2 foliations can thus be obtained from invariants 
for plane quartics by composition. 
However, the latter invariants are not completely known. In 
\cite{Dix}, Thm.~3.2, p.~286, assuming $k$ is the field of complex 
numbers, 
Dixmier produced a homogeneous system of parameters for the 
algebra of invariants of the quartics: seven homogeneous invariants of degrees 
3, 6, 9, 12, 15, 18 and 27. More invariants should be necessary. 
According to \cite{Dix}, p.~280, the algebra of invariants can be generated by 
56 invariants, though Shioda \cite{Shi}, p.~1046, 
conjectured that 13 should be enough. 

At any rate, if the foliation is not semi-stable, neither is the corresponding 
quartic. This can be seen directly from our explicit description of the 
associated quartic, as follows. 
If $w$ is not semi-stable, there are $g\in\SL(3)$ and integers 
$r_1,r_2,r_3$ satisfying 
$$
r_1+r_2+r_3=0\quad\text{and}\quad 0<r_1\geq r_2\geq r_3<0
$$
such that \eqref{HM} holds and the inequalities are strict. 
As in the proof of Theorem~\ref{thm11}, 
assume $g=1$. Then, reasoning as in the proof of that theorem, 
we can show that
\begin{equation}\label{coeff0}
a_{(2,0,0)}=a_{(1,1,0)}=a_{(1,0,1)}=b_{(2,0,0)}=b_{(1,1,0)}=0.
\end{equation}
Furthermore, either $b_{(1,0,1)}=0$ or
\begin{equation}\label{coeff00}
a_{(0,2,0)}=a_{(0,1,1)}=b_{(0,2,0)}=0.
\end{equation}
Thus, using \eqref{coeff0} to simplify the equation of the quartic, we 
get:
\begin{equation}\label{quarticu}
\begin{aligned}
&\Big(c_{(2,0)}c_{(0,2)}-\frac{c_{(1,1)}^2}{4}\Big)\check{x}^4
+\Big(\frac{c_{(1,1)}b_{(0,1,1)}}{2}-c_{(0,2)}b_{(0,2,0)}-c_{(2,0)}b_{(0,0,2)}=0
\Big)\check{x}^3\check{y}\\
+&\Big(b_{(0,2,0)}b_{(0,0,2)}-\frac{b_{(0,1,1)}^2}{4}-\frac{c_{(1,1)}b_{(1,0,1)}}{2}
\Big)\check{x}^2\check{y}^2
+\Big(\frac{b_{(1,0,1)}b_{(0,1,1)}}{2}\Big)\check{x}\check{y}^3\\
-&\Big(\frac{b_{(1,0,1)}^2}{4}\Big)\check{y}^4+
\Big(c_{(0,2)}a_{(0,2,0)}+c_{(2,0)}a_{(0,0,2)}
-\frac{c_{(1,1)}a_{(0,1,1)}}{2}\Big)\check{x}^3\check{z}\\
+&\Big(\frac{b_{(0,1,1)}a_{(0,1,1)}}{2}+c_{(2,0)}b_{(1,0,1)}-b_{(0,0,2)}a_{(0,2,0)}
-b_{(0,2,0)}a_{(0,0,2)}\Big)\check{x}^2\check{y}\check{z}\\
-&\Big(b_{(0,2,0)}b_{(1,0,1)}+
\frac{b_{(1,0,1)}a_{(0,1,1)}}{2}\Big)\check{x}\check{y}^2\check{z}
+\Big(a_{(0,2,0)}a_{(0,0,2)}-\frac{a_{(0,1,1)}^2}{4}\Big)\check{x}^2\check{z}^2\\
+&\Big(b_{(1,0,1)}a_{(0,2,0)}\Big)\check{x}\check{y}\check{z}^2=0.
\end{aligned}
\end{equation}
Then $(0:0:1)$ is a singular point of the quartic. Furthermore, 
if $b_{(1,0,1)}\neq 0$, then \eqref{coeff00} holds, and the 
equation becomes:
\begin{align*}
&\Big(c_{(2,0)}c_{(0,2)}-\frac{c_{(1,1)}^2}{4}\Big)\check{x}^4
+\Big(\frac{c_{(1,1)}b_{(0,1,1)}}{2}-c_{(2,0)}b_{(0,0,2)}
\Big)\check{x}^3\check{y}\\
-&\Big(\frac{b_{(0,1,1)}^2}{4}+\frac{c_{(1,1)}b_{(1,0,1)}}{2}
\Big)\check{x}^2\check{y}^2
+\Big(\frac{b_{(1,0,1)}b_{(0,1,1)}}{2}\Big)\check{x}\check{y}^3
-\Big(\frac{b_{(1,0,1)}^2}{4}\Big)\check{y}^4\\
+&\Big(c_{(2,0)}a_{(0,0,2)}\Big)\check{x}^3\check{z}
+\Big(c_{(2,0)}b_{(1,0,1)}\Big)\check{x}^2\check{y}\check{z}=0.
\end{align*}
In this case, the quartic has a triple point at $(0:0:1)$ with two equal 
tangent lines, or a quadruple point, whence is 
not semi-stable, according to \cite{FoKiM}, p.~80. On the other hand, 
if $b_{(1,0,1)}=0$, then the equation of the quartic becomes
\begin{align*}
&\Big(c_{(2,0)}c_{(0,2)}-\frac{c_{(1,1)}^2}{4}\Big)\check{x}^4
+\Big(\frac{c_{(1,1)}b_{(0,1,1)}}{2}-c_{(0,2)}b_{(0,2,0)}-c_{(2,0)}b_{(0,0,2)}
\Big)\check{x}^3\check{y}\\
+&\Big(b_{(0,2,0)}b_{(0,0,2)}-\frac{b_{(0,1,1)}^2}{4}
\Big)\check{x}^2\check{y}^2
+\Big(c_{(0,2)}a_{(0,2,0)}+c_{(2,0)}a_{(0,0,2)}
-\frac{c_{(1,1)}a_{(0,1,1)}}{2}\Big)\check{x}^3\check{z}\\
+&\Big(\frac{b_{(0,1,1)}a_{(0,1,1)}}{2}-b_{(0,0,2)}a_{(0,2,0)}
-b_{(0,2,0)}a_{(0,0,2)}\Big)\check{x}^2\check{y}\check{z}\\
+&\Big(a_{(0,2,0)}a_{(0,0,2)}-\frac{a_{(0,1,1)}^2}{4}\Big)\check{x}^2\check{z}^2=0,
\end{align*}
whence the union of a double line, $\check{x}=0$, and a 
conic, thus again not semi-stable, according to loc.~cit..

However, there are nonsemi-stable quartics with milder singularities 
that do not correspond to nonsemi-stable 
foliations. For instance, if we set \eqref{coeff0}, we end up with 
 Equation \eqref{quarticu} for the quartic. If we further set 
$a_{(0,2,0)}=a_{(0,1,1)}=0$, we get
\begin{equation}
\begin{aligned}
&\Big(c_{(2,0)}c_{(0,2)}-\frac{c_{(1,1)}^2}{4}\Big)\check{x}^4
+\Big(\frac{c_{(1,1)}b_{(0,1,1)}}{2}-c_{(0,2)}b_{(0,2,0)}-c_{(2,0)}b_{(0,0,2)}
\Big)\check{x}^3\check{y}\\
+&\Big(b_{(0,2,0)}b_{(0,0,2)}-\frac{b_{(0,1,1)}^2}{4}-\frac{c_{(1,1)}b_{(1,0,1)}}{2}
\Big)\check{x}^2\check{y}^2
+\Big(\frac{b_{(1,0,1)}b_{(0,1,1)}}{2}\Big)\check{x}\check{y}^3\\
-&\Big(\frac{b_{(1,0,1)}^2}{4}\Big)\check{y}^4+
\Big(c_{(2,0)}a_{(0,0,2)}\Big)\check{x}^3\check{z}
+\Big(c_{(2,0)}b_{(1,0,1)}
-b_{(0,2,0)}a_{(0,0,2)}\Big)\check{x}^2\check{y}\check{z}\\
-&\Big(b_{(0,2,0)}b_{(1,0,1)}\Big)\check{x}\check{y}^2\check{z}=0.
\end{aligned}
\end{equation}
This quartic has a triple  or quadruple point, and is thus nonsemi-stable. If we choose 
the remaining coordinates of $\mathbf F_2$ such that 
$$
b_{(1,0,1)}b_{(0,2,0)}\neq 0\quad\text{and}\quad
b_{(1,0,1)}c_{(2,0)}+a_{(0,0,2)}b_{(0,2,0)}\neq 0,
$$
then the triple point has distinct tangent lines. If we now let none of these 
lines be contained in the quartic, which is an open condition on the 
parameters that can be satisfied, as it can be easily verified with 
\cocoa \cite{cocoa}, 
then $(0:0:1)$ is the unique singular 
point of the quartic. Thus, the quartic  arises from  a semi-stable foliation. 
The above simple example shows that there are invariants of degree-2 foliations that do not arise from invariants of quartics.
\end{subsct}

\vspace{0.5cm}

{\smallsc Instituto de Matem\'atica Pura e Aplicada, 
Estrada Dona Castorina 110, 22460-320 Rio de Janeiro RJ, Brazil}

{\smallsl E-mail address: \small\verb?esteves@impa.br?}

\vspace{0.5cm}

{\smallsc Universit\`a degli Studi di Torino, Via Carlo Alberto 10, 
10123 Torino, Italy}

{\smallsl E-mail address: \small\verb?marina.marchisio@unito.it?}


\begin{thebibliography}{99}

\bibitem{A}
C. Alc\'antara,
\emph{The good quotient of the semi-stable foliations of 
$\mathbb C\mathbb P^2$ of degree $1$},
Results Math. {\bfseries 53} (2009), 1--7.

\bibitem{AA} C. Alc\'antara,
\emph{Geometric invariant theory for holomorphic foliations on  $\mathbb C\mathbb P^2$ of degree $2$},
Glasgow Math. J. {\bfseries 53} (2011), 153--168.


\bibitem{cocoa} CoCoATeam, 
{\it \cocoa: a system for doing Computations in 
Commutative Algebra}.
Available at http://cocoa.dima.unige.it.

\bibitem{D} G. Darboux,
\emph{M\'emoire sur les \'equations diff\'erentielles alg\'ebriques du premier 
ordre e du premier degr\'e (M\'elanges)},
Bull. Sci. Math\`ematiques 2\`eme s\'erie {\bfseries 2} (1878), 60--96; 
123--144; 151--200.

\bibitem{Dix} J. Dixmier,
\emph{On the projective invariants of quartic plane curves},
Adv. in Math. {\bfseries 64} (1987), 279--304.

\bibitem{FoKiM}
J. Fogarty, F. Kirwan and D. Mumford,
\emph{Geometric Invariant Theory},
Ergebnisse der Mathematik und ihrer Grenzgebiete (2), vol. 34, 
Springer-Verlag, Berlin, 1994.

\bibitem{Fu} W. Fulton,
\emph{Intersection Theory},
Ergebnisse der Mathematik und ihrer Grenzgebiete (3), vol.~2, 
Springer-Verlag, Berlin, 1984.

\bibitem{G-MKe} X. Gom\'ez-Mont and G. Kempf,
\emph{Stability of meromorphic vector fields in projective spaces},
Comment.~Math.~Helv. {\bfseries 64} (1989), 462--473.

\bibitem{J}
J.-P. Jouanolou,
\emph{\'Equations de Pfaff alg\'ebriques},
Lecture Notes in Mathematics, vol. 708, Springer-Verlag, Berlin, 1979.

\bibitem{Ok} 
C. Okonek, M. Schneider and H. Spindler,
\emph{Vector bundles on complex projective spaces}, 
Progress in Mathematics, vol. 3, Birkh\"auser, Boston, 1980.
 
\bibitem{P}
H. Poincar\'e,
\emph{Sur l'int\'egration alg\'ebrique des \'equations diff\'erentielles du 
premier ordre et du premier degr\'e}, I and II, 
R. Circ. Mat. Palermo {\bfseries 5} (1891), 161--191; {\bfseries 11} (1897), 
193--239.

\bibitem{Shi}
T. Shioda,
\emph{On the graded ring of invariants of binary octavics},
Amer. J. Math. {\bfseries 89} (1967), 1022--1046.

\end{thebibliography}
\end{document}